\def\+{\includegraphics[scale=0.5]{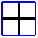}}
\def\bl{\includegraphics[scale=0.5]{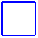}}
\def\bt{\includegraphics[scale=0.5]{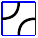}}
\def\rt{\includegraphics[scale=0.5]{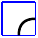}}
\def\jt{\includegraphics[scale=0.5]{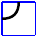}}
\def\htile{\includegraphics[scale=0.5]{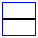}}
\def\vtile{\includegraphics[scale=0.5]{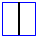}}
\tikzset{>=stealth',
  head/.style = {fill = white, text=black}, 
  pil/.style={->,thick},
  junct/.style = {draw,circle,inner sep=0.5pt,outer sep=0pt, fill=black}
  }
\definecolor{light-gray}{gray}{0.85}
\definecolor{dark-gray}{gray}{0.25}
\newcounter{x}
\newcounter{y}
\newcounter{z}
\newcommand\xaxis{210}
\newcommand\yaxis{-30}
\newcommand\zaxis{90}
\newcommand\topside[3]{
  \fill[fill=white, draw=black,shift={(\xaxis:#1)},shift={(\yaxis:#2)},
  shift={(\zaxis:#3)}] (0,0) -- (30:1) -- (0,1) --(150:1)--(0,0);
}
\newcommand\leftside[3]{
  \fill[fill=black, draw=black,shift={(\xaxis:#1)},shift={(\yaxis:#2)},
  shift={(\zaxis:#3)}] (0,0) -- (0,-1) -- (210:1) --(150:1)--(0,0);
}
\newcommand\rightside[3]{
  \fill[fill=gray, draw=black,shift={(\xaxis:#1)},shift={(\yaxis:#2)},
  shift={(\zaxis:#3)}] (0,0) -- (30:1) -- (-30:1) --(0,-1)--(0,0);
}
\newcommand\cube[3]{
  \topside{#1}{#2}{#3} \leftside{#1}{#2}{#3} \rightside{#1}{#2}{#3}
}
\newcommand\planepartition[1]{
 \setcounter{x}{-1}
  \foreach \a in {#1} {
    \addtocounter{x}{1}
    \setcounter{y}{-1}
    \foreach \b in \a {
      \addtocounter{y}{1}
      \setcounter{z}{-1}
      \foreach \c in {0,...,\b} {
        \addtocounter{z}{1}
      \ifthenelse{\c=0}{\setcounter{z}{-1},\addtocounter{y}{0}}{
        \cube{\value{x}}{\value{y}}{\value{z}}
      }
    }
  }
 }
}
\newtheorem{theorem}{Theorem}[section]
\newtheorem{proposition}[theorem]{Proposition}
\newtheorem{lemma}[theorem]{Lemma}
\newtheorem{deflemma}[theorem]{Definition-Lemma}
\newtheorem{corollary}[theorem]{Corollary}
\theoremstyle{definition}
\newtheorem{remark}[theorem]{Remark}
\newtheorem{definition}[theorem]{Definition}
\newtheorem{example}[theorem]{Example}
\DeclareMathOperator{\ess}{ess}
\DeclareMathOperator{\PD}{PD}
\DeclareMathOperator{\BPD}{BPD}
\DeclareMathOperator{\ASM}{ASM}
\DeclareMathOperator{\TSSCPP}{TSSCPP}
\DeclareMathOperator{\wt}{wt}
\DeclareMathOperator{\blnk}{blank}
\DeclareMathOperator{\cross}{cross}
\DeclareMathOperator{\cnt}{count}
\DeclareMathOperator{\nw}{NW}
\DeclareMathOperator{\slide}{\mathrm{Slide}}
\DeclareMathOperator{\droop}{\mathrm{Droop}}
\title{A pipe dream perspective on totally symmetric self-complementary plane partitions}
\author{Daoji Huang and Jessica Striker}
\begin{document}

\maketitle

\begin{abstract}   We characterize totally symmetric self-complementary plane partitions (TSSCPP) as bounded compatible sequences satisfying a Yamanouchi-like condition. As such, they are in bijection with certain pipe dreams.
  Using this characterization and 
 the recent bijection of [Gao-Huang] between reduced pipe dreams and reduced bumpless pipe dreams, we give a bijection between  alternating sign matrices and TSSCPP in the reduced, 1432-avoiding case. We also give a different bijection in the 1432- and 2143-avoiding case that preserves natural poset structures on the associated pipe dreams and bumpless pipe dreams.  \end{abstract}

\section{Introduction}
\emph{Plane partitions} are three-dimensional analogues of ordinary partitions. Just as partitions in an $a\times b$ are counted by a lovely formula $\binom{a+b}{a}$, plane partitions in an $a\times b\times c$ box are enumerated by MacMahon's product formula \scalebox{.8}{$\displaystyle\prod_{i=1}^a \displaystyle\prod_{j=1}^b \displaystyle\prod_{k=1}^c \displaystyle\frac{i+j+k-1}{i+j+k-2}$}~\cite{MacMahon}.
In a 1986~\cite{Stanley_PP}, Stanley considered symmetry operations on plane partitions, namely, reflection (transpose), rotation, and complementation. This yielded 10 symmetry classes of plane partitions consisting of plane partitions invariant under combinations of these operations. The plane partitions invariant under all three operations are called \emph{totally symmetric self-complementary} (\emph{TSSCPP}). As in the case of all plane partitions, each symmetry class has a nice enumeration. The set of TSSCPP inside a $2n\times 2n\times 2n$ box was shown in 1994 by Andrews~\cite{Andrews1994} to be counted by \scalebox{.9}{$\displaystyle\prod_{j=0}^{n-1}\displaystyle\frac{(3j+1)!}{(n+j)!}$}. This was, at the time, the conjectured~\cite{MRRASMDPP} number of $n\times n$ \emph{alternating sign matrices} (\emph{ASM}).
The 1996 proofs of this conjecture~\cite{ZEILASM,kuperbergASMpf} sparked a search for a natural, explicit bijection between TSSCPP and ASM. Partial bijections have been found on small subsets, including 
the permutation case \cite{PermTSSCPP}, the case of two monotone triangle diagonals \cite{Biane_Cheballah_1,Bettinelli}, and the $312$-avoiding case~\cite{Ayyer312}. 
This paper interprets TSSCPP as \emph{pipe dreams} to extend the bijection of \cite{PermTSSCPP} to what appears to be a larger subset than any previous partial bijection; see Section~\ref{sec:remarks} for discussion. 

Our first main theorem is below; see Figure~\ref{fig:main_bij} for an example and Section~\ref{sec:Background} for the relevant definitions. 
Given $\pi\in S_n$, let $\TSSCPP^{red}(\pi)$ denote the set of TSSCPP whose associated pipe dream is \emph{reduced} and has permutation $\pi$, and let $\ASM^{red}(\pi)$ denote the set of ASM whose associated \emph{bumpless pipe dream} is reduced and has permutation $\pi$. 
\begin{theorem}
\label{thm:main}
Let $\pi\in S_n$. There is an explicit weight-preserving injection $\varphi$ from $\TSSCPP^{red}(\pi)$ to $\ASM^{red}(\pi)$.
If $\pi$ avoids $1432$, then $\varphi$ is a bijection.
\end{theorem}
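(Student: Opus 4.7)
The plan is to construct $\varphi$ as the composition of three maps. First, using the characterization (stated in the abstract and to be established in the body of the paper) of TSSCPP as bounded compatible sequences satisfying the Yamanouchi-like condition, identify each $T\in\TSSCPP^{red}(\pi)$ with a specific reduced pipe dream $P(T)\in\PD^{red}(\pi)$ lying in the distinguished ``Yamanouchi'' subset. Second, apply the Gao--Huang bijection to $P(T)$ to obtain a reduced bumpless pipe dream $B(T)\in\BPD^{red}(\pi)$. Third, read off the ASM from $B(T)$; this is a bijection with $\ASM^{red}(\pi)$ by definition. Set $\varphi(T)$ to be this ASM.

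Injectivity of $\varphi$ is immediate, since each of the three component maps is injective on its domain. Weight-preservation reduces to tracking statistics through the composition: the natural weight on a TSSCPP (counting some set of cubes or cells) should correspond under the characterization to a natural pipe-dream statistic, which is preserved by Gao--Huang, and which in turn translates to the standard ASM weight under the BPD-to-ASM dictionary. Each of these steps should be essentially a bookkeeping check once the Yamanouchi-like characterization is spelled out.

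The substantive content is the bijection claim when $\pi$ avoids $1432$. Because the last two maps in the composition are already bijections, $\varphi$ is surjective if and only if \emph{every} reduced pipe dream of a $1432$-avoiding $\pi$ automatically satisfies the Yamanouchi-like condition, i.e., the Yamanouchi subset of $\PD^{red}(\pi)$ coincides with all of $\PD^{red}(\pi)$. My plan is to prove this by contrapositive: suppose $P\in\PD^{red}(\pi)$ fails the Yamanouchi condition at some row; locate a minimal violation as a specific local configuration of crosses and elbows; and then follow the four pipes participating in this configuration to the boundary of the grid to extract positions $i_1<i_2<i_3<i_4$ whose $\pi$-values form a $1432$ pattern. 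The main obstacle is precisely this last step, since it requires a careful analysis of how the compatible-sequence inequality interacts with the wiring of a reduced pipe dream, and the correct ``four-pipe gadget'' corresponding to $1432$ must be identified and verified to be forced by a minimal violation. Combining this characterization with the injectivity and weight-preservation above yields Theorem~\ref{thm:main}.
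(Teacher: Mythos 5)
Your construction of $\varphi$ as the composition TSSCPP $\to$ pseudo-Yamanouchi pipe dream $\to$ reduced BPD (via Gao--Huang) $\to$ ASM, together with the injectivity and weight-bookkeeping, is exactly the paper's argument, and your reduction of surjectivity to the claim that every element of $\PD^{red}(\pi)$ is pseudo-Yamanouchi when $\pi$ avoids $1432$ is also the right reduction (this is Lemma~\ref{lem:av1432}). The problem is that this last claim is the entire substance of the theorem, and you do not prove it: you propose to take a ``minimal violation,'' realize it as a local configuration of crosses and elbows, and trace four pipes to a $1432$ pattern, while explicitly flagging that identifying and verifying the ``four-pipe gadget'' is the main obstacle. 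That obstacle is real. The pseudo-Yamanouchi condition is not a local configuration: it compares the running counts $\cnt(k,j)$ and $\cnt(k,j+1)$ of crosses on two whole antidiagonals up to a point in the reading order, so a minimal violation does not obviously localize to a bounded window from which four specific pipes can be extracted, and no candidate gadget is exhibited. As written, the bijectivity half of the theorem rests on an unproved and nontrivial combinatorial claim.

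For comparison, the paper sidesteps any direct pattern-tracing by a connectivity-plus-invariance argument: it quotes Gao's theorem that for $1432$-avoiding $\pi$ all of $\PD^{red}(\pi)$ is connected by \emph{simple slides}, proves that a simple slide preserves the pseudo-Yamanouchi property (a short computation with the compatible sequence, using that only the diagonals $\tilde{j}-1,\tilde{j},\tilde{j}+1$ and the indices between the old and new positions of the moved letter are affected), and checks the base case that the bottom (left-justified) pipe dream is pseudo-Yamanouchi. If you want to complete your proposal along your own lines you would need to actually produce the forced $1432$ pattern from a violation; otherwise the efficient repair is to import Gao's connectivity result and replace your contrapositive step with the slide-invariance argument.
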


While the bijection of Theorem~\ref{thm:main} preserves a meaningful weight on both sides, it does not, in general, preserve the natural partial order. A corollary of our second main result, Theorem~\ref{thm:poset_bij}, 
gives a different poset-preserving bijection between $\TSSCPP^{red}(\pi)$ and $\ASM(\pi)$ in the case that $\pi$ avoids both $1432$ and $2143$. (Note in this case, $\ASM^{red}(\pi)=\ASM(\pi)$.) Theorem~\ref{thm:poset_bij} itself relates the posets  $\slide(\pi)$ on pipe dreams and $\droop(\pi)$ on bumpless pipe dreams of such permutations, giving a poset-preserving bijection by decomposing into  Grassmannian and inverse-Grassmannian blocks. 

The paper is organized as follows. Section~\ref{sec:Background} contains background on the relevant objects, including the permutation case TSSCPP bijection of \cite{PermTSSCPP} and the bijection of \cite{GH} between reduced pipe dreams and reduced bumpless pipe dreams, which are  important ingredients in our proof of Theorem~\ref{thm:main}. Section~\ref{sec:TSSCPP_PD} proves Theorem~\ref{thm:TSSCPP_Yam} characterizing TSSCPP as pipe dreams subject to a Yamanouchi-like condition. Section~\ref{sec:main} concerns Theorem~\ref{thm:main} and its proof. Section~\ref{sec:poset} proves Theorems~\ref{thm:invGrass}, \ref{thm:Grass}, and \ref{thm:poset_bij} relating the posets $\droop(\pi)$ and $\slide(\pi)$ in the respective cases where $\pi$ is inverse-Grassmannian, Grassmannian, or avoiding both $1432$ and $2143$. These theorems yield Corollaries~\ref{cor:invGrass}, \ref{cor:Grass}, and~\ref{cor:ASMTSSCPPposet_bij}, which give poset-preserving bijections between $\TSSCPP^{red}(\pi)$ and $\ASM(\pi)$ for these three types of permutations. Section~\ref{sec:remarks} gives some concluding remarks.

An extended abstract of this paper was published in the proceedings of the 2023 FPSAC conference~\cite{BPD_FPSAC2023}.

\begin{figure}
\begin{center}
\scalebox{.8}{\includegraphics[scale=.35]{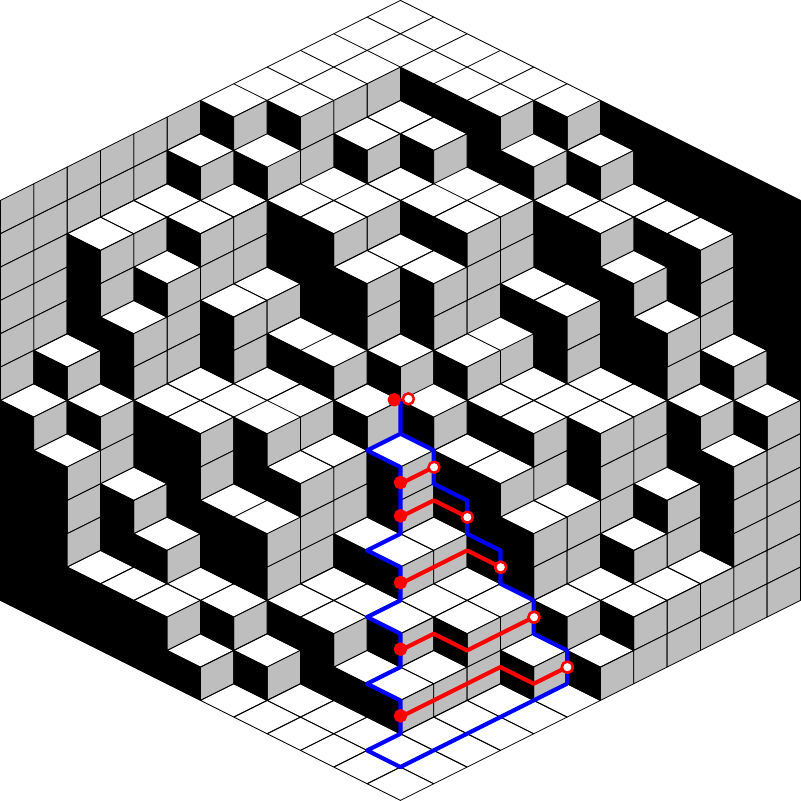} \raisebox{1.5cm}{$\Leftrightarrow$}\includegraphics[height=1.4in]{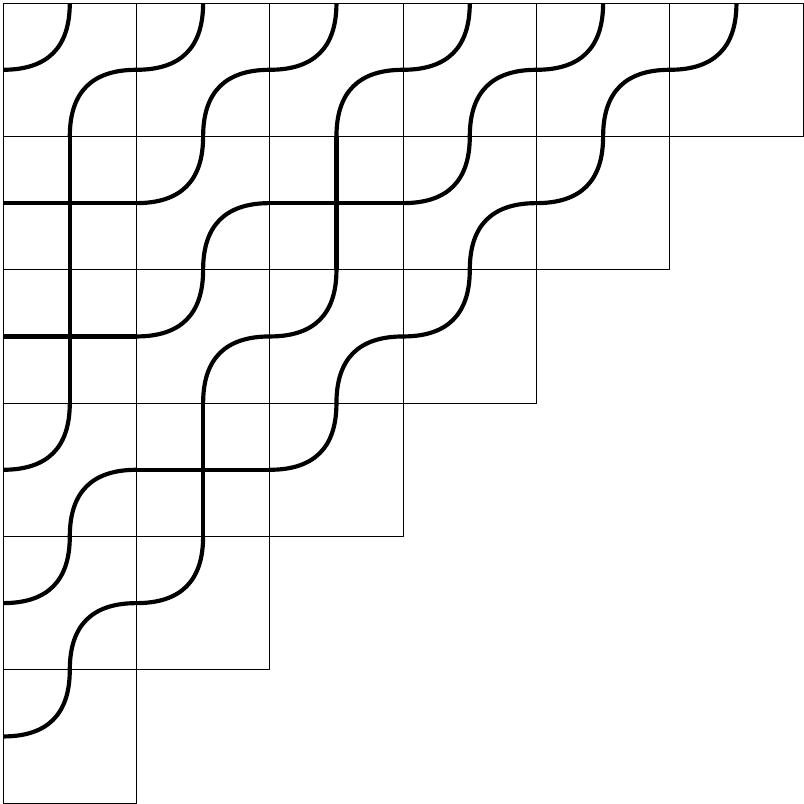} \raisebox{1.5cm}{$\Leftrightarrow$}
\includegraphics[height=1.4in]{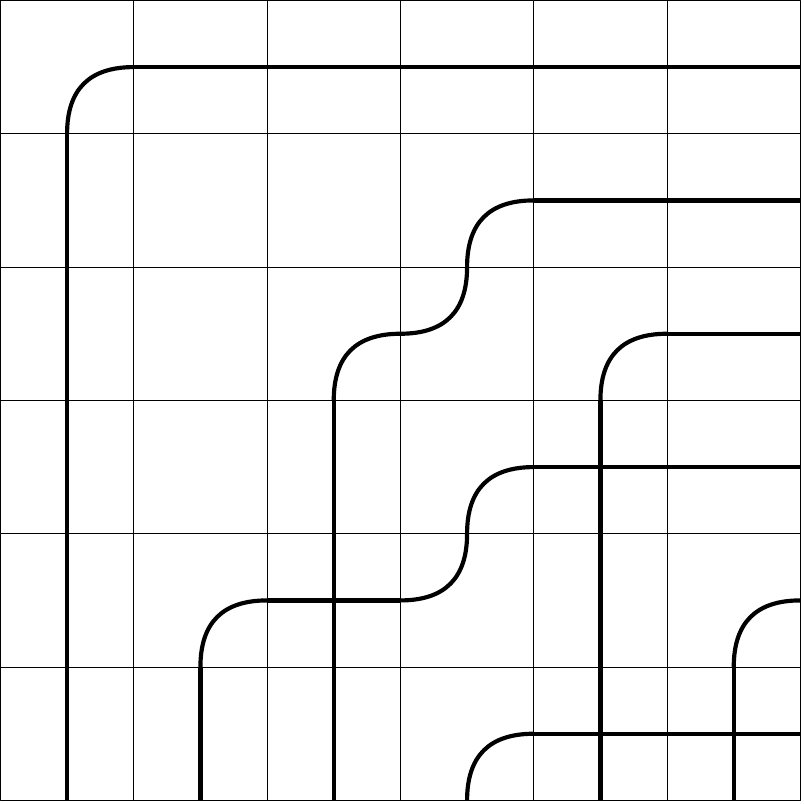} \raisebox{1.2cm}{$\Leftrightarrow$}
\raisebox{1.5cm}{$\left( \begin{array}{rrrrrr}
1&0&0&0&0&0\\
0&0&0&1&0&0\\
0&0&1&-1&1&0\\
0&0&0&1&0&0\\
0&1&0&-1&0&1\\
0&0&0&1&0&0
\end{array}\right)$}}
\end{center}
\caption{An example of the bijection of this paper. From left to right the objects are: TSSCPP, pipe dream, bumpless pipe dream, ASM. The pipe dream and bumpless pipe dream both have permutation $135264$, which avoids $1432$. Note the black rhombi in column $k$ (from the left) of the TSSCPP fundamental domain correspond to the cross tiles in row $k$ (from the top) of the pipe dream. This equals the number of blank tiles in row $k$ of the bumpless pipe dream, which is the number of positive inversions of row $k$ of the ASM.}
\label{fig:main_bij}
\end{figure}

\section{Background}
\label{sec:Background}
In this section, we review relevant definitions and bijections from the literature. Subsections~\ref{sec:asm}, \ref{sec:tsscpp}, \ref{sec:bpd}, and \ref{sec:pd} review definitions of ASM, TSSCPP, bumpless pipe dreams, and pipe dreams, respectively. Subsections~\ref{sec:boolean} and \ref{sec:BPDPD_bij} contain less-familiar bijections that are important for our main results.

\subsection{Alternating sign matrices}
\label{sec:asm}
In this subsection, we define alternating sign matrices (see e.g.~\cite{MRRASMDPP}) and the weight that is preserved in Theorem~\ref{thm:main}.
\begin{definition}
An \textbf{alternating sign matrix} (\textbf{ASM}) is a square matrix with entries in $\{0,1,-1\}$ such that the rows and columns each sum to $1$ and the nonzero entries alternate in sign across each row and across each column.
\end{definition}
Alternating sign matrices are in bijection with configurations of the  \emph{six-vertex / square ice model} of statistical physics with \emph{domain wall boundary conditions}; this was an essential element of the enumeration proof of~\cite{kuperbergASMpf}. 
The $3\times 3$ alternating sign matrices are below.
\[
\footnotesize
\left( 
% [inline block 0: 8 envs, 54328 chars -> data_tex | \begin{array}{rrr} 1 & 0 & 0 \\...]

\end{center}
\caption{The number of ASM and TSSCPP in correspondence via the various results of this paper and as compared to other subset bijections. The column headings in bold represent results from this paper.}
\label{tab1}
\end{table}

One may ask whether any of these bijections include any of the other partial bijections. As noted earlier, the weight-preserving injection of Theorem~\ref{thm:main} (Column $7$) extends the permutation case bijection of~\cite{PermTSSCPP} (Column $2$). Also, the subsets addressed in this paper are proper subsets of each other (Columns 3, 6, 7). But it is useful to note that the subset of ASM included in other bijections discussed here may not be a proper subset of the ASM included in Theorem~\ref{thm:main}. In particular, the one ASM with $n=4$ whose BPD is non-reduced (pictured in Figure~\ref{fig:pathology}, upper right) has monotone triangle with only two non-trivial diagonals. Thus, it is included in the bijection of Column 5 \cite{Biane_Cheballah_1,Bettinelli}, but not in Theorem~\ref{thm:main} (Column~7).

\begin{remark}
\label{remark:213}
Pattern avoidance is discussed in \cite{Ayyer312}, in the sense that the bijection of Column $4$ \cite[Theorem 4]{Ayyer312} includes all permutations that avoid the pattern $312$, using the conventions of that paper. In the conventions of the current paper, this corresponds to the set $\{\ASM(\pi) \ | \ \pi \mbox{ avoids } 213\}$, the ASM whose associated permutation $\pi$ avoids the pattern $213$. (A gapless monotone triangle is obtained from such an ASM $A$ as follows: the $i$th row consists of the column indices whose partial sum \textit{from the bottom row}  to row $(n-i+1)$ of $A$ equals $1$.) 
%Columns 3 and 4 in Table~\ref{tab1}. 
If a permutation avoids $213$, it necessarily avoids $2143$, but it might not avoid $1432$. Table~\ref{tab1} shows that for small $n$, the cardinality of $\{\ASM(\pi) \ | \ \pi \mbox{ avoids } 213\}$ is smaller than the cardinality of $\{\ASM(\pi) \ | \ \pi \mbox{ avoids } 1432 \mbox{ and } 2143\}$, but for larger values of $n$ in the table, this comparison is reversed.
\end{remark}

One may ask whether it is possible to extend the bijection of Theorem~\ref{thm:main} beyond $1432$-avoiding permutations and/or remove the reducedness restrictions. There are some challenges. In the case of $n=4$, $40$ pseudo-Yamanouchi pipe dreams are reduced, so  all the corresponding TSSCPP are mapped to reduced BPD, and therefore to ASM. There are only two remaining TSSCPPs, shown in the left column of Figure~\ref{fig:pathology}. There is one remaining reduced pipe dream, shown in the bottom middle of Figure~\ref{fig:pathology}, which maps by $\varphi$ to the BPD on the bottom right. Comparing this reduced pipe dream with the two remaining TSSCPP pipe dreams, we see it differs from the one with three crosses by moving the top cross to the right, creating a non-reduced pipe dream. In general, the set of reduced pipe dreams for a fixed permutation are connected by $(n\times 2)$--ladder moves and $(2\times n)$--chute moves \cite{BB}, and these moves do not always preserve the pseudo-Yamanouchi property when $n>2$. 

In the forthcoming work of Shimozono, Yu, and the first author, a weight-preserving bijection between the set of all $2^{\binom{n}{2}}$ pipe dreams in $\PD(n)$ and the set of all marked bumpless pipe dreams (a marked BPD is a BPD whose $\jt$-tiles admit a binary marking) of size $n$, which generalizes the bijection in \cite{GH}. Under this bijection, the BPD on the top right of Figure~\ref{fig:pathology} is mapped to the PD on the top middle. Notice that the one remaining TSSCPP PD on the top left has one more $\+$, so a bijection that preserves this weight is no longer possible.
\begin{figure}
\includegraphics[scale=0.8]{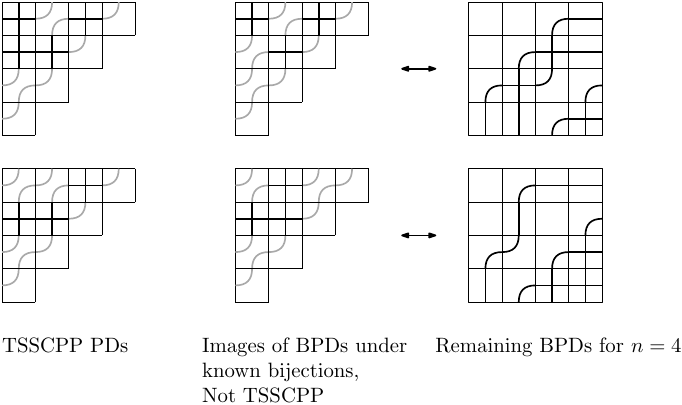}
\caption{Challenges for the remaining unmatched TSSCPP pipe dreams and bumpless pipe dreams}
\label{fig:pathology}
\end{figure}

\section*{Acknowledgments}
The authors would like to thank the anonymous referees for helpful comments. They also thank Yibo Gao for helpful comments, Mathilde Bouvel and Rebecca Smith for comments on pattern avoidance and gapless monotone triangles (which resulted in Remark~\ref{remark:213}), and Anna Weigandt for comments on connections to tableaux (which precipitated Remark~\ref{remark:tab}). 
They also thank the developers of \verb|SageMath|~\cite{sage} software, which was useful in this research. They thank O.~Cheong for developing Ipe~\cite{ipe}, which was used to create the figures in this paper. JS was supported by a grant from the Simons Foundation/SFARI (527204, JS) and NSF grant DMS-2247089. DH was supported by NSF grant DMS-2202900.

\bibliographystyle{alpha}
\bibliography{main}

\end{document}